\newcommand{\edit}{\@ifstar\edit@star\edit@nostar}
\newcommand{\edit@star}[1]{{\color{blue}#1}}
\newcommand{\edit@nostar}[2][\@]{%
	\marginnote{%
	\if#1\@
		\"Andringar
	\else
		\"Andring: #1
	\fi
	}{\color{blue}#2}}
\newcommand{\comment}[1]{\bgroup\reversemarginpar\marginnote{\null\hfill Kommentar\hfill\null}\emph{#1}\egroup}
\theoremstyle{definition}
\newtheorem{theorem}{Theorem}[section]
\newtheorem{lemma}[theorem]{Lemma}
\newtheorem{proposition}[theorem]{Proposition}
\newtheorem{corollary}[theorem]{Corollary}
\newtheorem{property}{Property}[section]
\newtheorem{definition}{Definition}
\newtheorem{problem}{Problem}
\theoremstyle{remark}
\crefname{claim}{Claim}{Claims}
\crefname{case}{Case}{Cases}
\crefname{subcase}{Case}{Cases}
\crefname{conjecture}{Conjecture}{Conjectures}
\newcounter{parenum}[theorem]
\let\eqref\labelcref
\crefname{equation}{}{}
\crefname{enumi}{}{}
\DeclareRobustCommand{\crefnocompress}[1]{\begingroup\@cref@compressfalse\cref{#1}\endgroup}
\DeclareRobustCommand{\crefnosort}[1]{\begingroup\@cref@sortfalse\cref{#1}\endgroup}
\newlength{\revdirraise}
\newlength{\revdirextraraise}
\DeclarePairedDelimiterX{\defset}[2]{\lbrace}{\rbrace}
	{\,#1:#2\,}
\newenvironment{textequation*}
	{\begin{equation}\begin{minipage}[b]{0.75\linewidth}
		\setlength{\RaggedRightRightskip}{0em plus 4em minus 2pt}\RaggedRight}
	{\end{minipage}\end{equation}\ignorespacesafterend}
\newcommand{\internalvertex}[2]{node [circle, inner sep=0pt, outer sep=0pt, minimum size=4pt, fill=#1, draw=#2] {}}
\newcommand{\vertex}{\internalvertex{black}{black}}
\newcommand{\wvertex}{\internalvertex{white}{black}}
\def\upstrut(#1,#2){\node at (0,#2) [anchor=south] {\vphantom{#1}}}
\def\downstrut(#1,#2){\node at (0,#2) [anchor=north] {\vphantom{#1}}}
\newcommand{\figsmallsquare}{%
\begin{tikzpicture}
\foreach \x in {-1,0}{
	\foreach \y in {-1,0}{
		\draw (\x,\y) -- (\x+1,\y)
              (\x,\y) -- (\x,\y+1);}}
\draw (-1,1) -- (1,1) -- (1,-1); % cheat
\draw (-1,0) -- (0,1) -- (1,0) -- (0,-1) -- cycle;
\foreach \x in {-1,0,1}{
	\foreach \y in {-1,0,1}{
		\tikzmath{\z=\x+\y;}
		\ifodd \z
			\draw (\x,\y) \wvertex;
		\else
			\draw (\x,\y) \vertex;
		\fi}}
\end{tikzpicture}}
\newcommand{\figbigsquare}{%
\begin{tikzpicture}
\foreach \x in {-2,...,1}{
	\foreach \y in {-2,...,1}{
		\draw (\x,\y) -- (\x+1,\y)
              (\x,\y) -- (\x,\y+1);}}
\draw (-2,2) -- (2,2) -- (2,-2); % cheat
\draw (-2,-1) -- (1,2) -- (2,1) -- (-1,-2) -- cycle;
\draw (2,-1) -- (-1,2) -- (-2,1) -- (1,-2) -- cycle;
\foreach \x in {-2,...,2}{
	\foreach \y in {-2,...,2}{
		\tikzmath{\z=\x+\y;}
		\ifodd \z
			\draw (\x,\y) \wvertex;
		\else
			\draw (\x,\y) \vertex;
		\fi}}
\end{tikzpicture}}
\begin{document}

\title{A note  on  transformations of edge colorings of  chordless graphs and triangle-free  graphs}
\author{Armen  Asratian%
	\footnote{Department of Mathematics, Linköping University, email: armen.asratian@liu.se}
	}
\date{}
\maketitle

\begin{abstract}
\noindent 
Bonamy et al. (2023)
proved that an optimal edge coloring of a simple  triangle--free  graph $G$ can be
 reached from any given proper edge coloring of $G$ through a series of Kempe changes.
 We show that a small modification of their proof  gives a possibility to obtain a similar result 
 for   a larger class  of simple graphs consisting of all  triangle-free and all chordless graphs
 (a graph $G$ is chordless if in every cycle $C$ of $G$  any two nonconsecutive vertices of $C$ are not adjacent).  
\end{abstract}

\bgroup
\noindent
\setlength{\parfillskip}{0pt}%
% to make it align both left and right
\textbf{Keywords: }%
Edge colorings,  Kempe changes, chordless graph 
\par
\egroup

\section{Introduction}

This note is inspired by the paper of Bonamy et al. \cite{bonamy23} which is devoted to transformations of edge colorings of triangle-free graphs.

A $t$-edge coloring or simply $t$-coloring of a graph $G=(V(G),E(G))$ is a
mapping
$\alpha: E(G)\longrightarrow  \{1,...,t\}$.
The set of edges of color $j$, denoted  by $M(\alpha,j)$, is called a {\it color class}, $j=1,..,t$.  
A $t$-coloring of $G$ is
called
proper  if no adjacent edges receive the same color. 
The minimum number $t$ for which there exists a  proper $t$-coloring of $G$ is called 
the {\em chromatic index} of  $G$ and is denoted by $\chi'(G)$. 
Throughout this paper, we only consider proper edge colorings, and so we will only write $t$-colorings to denote proper $t$-edge colorings.

The following result was obtained by Vizing \cite{vizing2}:

\begin{theorem}
  For every   simple graph $G$,
$\chi'(G)\le \Delta(G)+1$, where $\Delta(G)$ denotes the maximum   degree of the vertices of $G$.
\end{theorem}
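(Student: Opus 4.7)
The plan is to prove the theorem by induction on the number of edges of $G$. The base case (empty edge set) is trivial. For the inductive step, delete an arbitrary edge $uv$ from $G$ and apply the inductive hypothesis to obtain a proper edge coloring of $G - uv$ using at most $\Delta(G)+1$ colors. It then suffices to modify this coloring, if necessary, so that $uv$ can also be colored without introducing a new color.

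For each vertex $w$, let $c(w) \subseteq \{1,\ldots,\Delta(G)+1\}$ denote the set of colors that do not appear on any edge incident to $w$; since $\deg(w) \le \Delta(G)$, we have $c(w) \neq \emptyset$. If $c(u) \cap c(v) \neq \emptyset$, any common missing color can be assigned to $uv$, so we may assume $c(u)$ and $c(v)$ are disjoint. Fix $\alpha \in c(u)$.

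Following Vizing, I would construct a \emph{fan} at $u$: a sequence of distinct neighbors $v = v_0, v_1, v_2, \ldots$ of $u$ together with colors $\beta_0, \beta_1, \ldots$ satisfying $\beta_i \in c(v_i)$ and such that the edge $uv_{i+1}$ has color $\beta_i$. The fan is extended greedily until one of two things happens: either some $\beta_k$ already lies in $c(u)$, in which case \emph{rotating} the fan by recoloring each $uv_i$ with $\beta_i$ for $0 \le i \le k$ extends the coloring to $uv$; or the sequence of colors repeats, so $\beta_s = \beta_j$ for some $j < s$. Since $u$ has only finitely many neighbors, one of these outcomes is inevitable.

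In the repetition case, I would invoke a Kempe chain argument. Let $H$ be the maximal connected subgraph of $G - uv$ whose edges are colored $\alpha$ or $\beta_s$ and which contains $u$; since $\alpha \in c(u)$, the subgraph $H$ is a path having $u$ as one endpoint. Interchanging the colors $\alpha$ and $\beta_s$ along $H$ preserves properness of the coloring, and a case analysis on the other endpoint of $H$ (noting that both $v_j$ and $v_s$ miss $\beta_s$, making each a candidate terminus) shows that after the swap, one of the truncated fans $v_0, \ldots, v_j$ or $v_0, \ldots, v_s$ admits a valid rotation, freeing a color to assign to $uv$. The main obstacle I expect is precisely this last bookkeeping step: one must verify that the edges $uv_i$ not lying on the Kempe chain retain their colors, that the distinctness of $\beta_0, \ldots, \beta_{s-1}$ ensures only one edge at $u$ is affected by the swap, and that the rotation on the appropriate truncated fan still assigns each color only to a vertex where it is indeed missing.
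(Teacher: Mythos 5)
The paper does not prove this statement at all: Theorem~1.1 is Vizing's theorem, quoted from the literature as a known result, so there is no internal proof to compare yours against. Your outline is the classical fan/Kempe-chain argument, which is the right (indeed the original) approach, and every ingredient you name --- induction on edges, missing-color sets, the fan at $u$, rotation when some $\beta_k$ is missing at $u$, and an $\alpha/\beta_s$ chain swap in the repetition case --- is the correct one.

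The one place where your write-up is a promise rather than a proof is the final case analysis, and it hides a real subtlety that your literal set-up would trip over. You propose to swap the chain $H$ containing $u$ and then ``recolor each $uv_i$ with $\beta_i$'' along a truncated fan. If $H$ avoids $v_j$ this works: $\beta_s$ becomes missing at $u$ (the edge $uv_{j+1}$ turns from $\beta_s$ to $\alpha$) and is still missing at $v_j$, so the fan $v_0,\dots,v_j$ rotates with $uv_j$ receiving $\beta_j=\beta_s$. But if $H$ terminates at $v_j$, then after the swap $\beta_j$ is \emph{no longer} missing at $v_j$ (the terminal $\alpha$-edge of $H$ at $v_j$ has become a $\beta_s$-edge), and rotating $v_0,\dots,v_s$ by the rule ``$uv_i\mapsto\beta_i$'' would moreover place the color $\beta_j=\beta_s$ on both $uv_j$ and $uv_s$, violating properness at $u$. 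The case still closes, but only if the rotation is understood as shifting each $uv_i$ to the \emph{current} (post-swap) color of $uv_{i+1}$, so that $uv_j$ receives $\alpha$ (now missing at $v_j$) rather than $\beta_j$; alternatively --- and this is the cleaner, standard route --- swap the $\alpha/\beta_s$ component containing $v_j$ or $v_s$ (whichever does not reach $u$) instead of the one containing $u$, so that $\alpha$ stays missing at $u$ and the terminal fan edge can simply be colored $\alpha$. You flagged exactly this verification as the expected obstacle, so the gap is one of execution, not of ideas; but as written the recoloring rule in the second sub-case is not yet correct and must be adjusted as above.
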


This result implies that for any  simple graph $G$, either $\chi'(G)=\Delta(G)$ or $\chi'(G)=\Delta(G)+1$. In the former case
	$G$ is said to be \emph{Class $1$}, and in the latter $G$ is {\em Class $2$}. Kempe changes play key role in the proof of Theorem 1.1.

Given a properly edge-colored graph, a {\it Kempe chain} is a maximal connected bicolored subgraph. A $K$-change (short for
{\it Kempe change}) 
corresponds
 to selecting a Kempe chain and swapping the two colors in it. 
  Two  $t$-colorings of a graph $G$ are $K$-equivalent (short for {\it Kempe-equivalent})  if one can be reached 
 from the other through a series of $K$-changes using colors from the set $\{1,...,t\}$.

Vizing \cite{vizing2} showed that any    
$t$-coloring of a simple graph $G$, $t>\Delta(G)+1$, can be transformed to a   
$(\Delta(G)+1)$-coloring of $G$
by using $K$-changes only. He also posed the following problem:

	\begin{problem}
\label{prob:vizing}
Is it true that any Class 1 graph $G$ satisfies the following property:
	every  $t$-coloring of $G$,  $t>\Delta(G)$, can be transformed to a  $\Delta(G)$-coloring  
of $G$ by
a sequence of $K$-changes? (This property we shall call the Vizing property.)
\end{problem}	

Mohar \cite{mohar07}  solved Problem 1 for any Class 1 graph $G$ in the case  $t\geq \Delta(G)+2$.
	In fact, he proved     that  all $t$-colorings of $G$ are $K$-equivalent if $t\geq \chi'(G)+2$. 
	Mohar also posed the following problem:

\begin{problem}
\label{prob:mohar}
 Is it true that all  $(\chi'(G)+1)$-colorings 
 of a  simple graph $G$ are $K$-equivalent?
\end{problem}

The author and Casselgren \cite{asratian16} showed that in  fact Problems 1 and 2   have the same answer
(see \cite{asratian16}, Theorem 1.1).

It was proved that the answers to Problems 1 and 2 is yes if either  
$\Delta(G)=3$ (McDonald,  Mohar and  Scheide \cite{mohar12}), $\Delta(G)=4$ (Asratian and Casselgren \cite{asratian16}),
 $\Delta(G)\geq 9$ and $G$ is planar (Cranston \cite{cranston}),
or $\Delta(G)\geq 5$ and the subgraph induced by the vertices of degree at least 5 in $G$, is acyclic \cite{asratian16}.
For triangle-free graphs the problems  were resolved to the positive by Bonamy et al \cite{bonamy23}.

In this paper we show that a small modification of the proof in \cite{bonamy23}  gives a possibility to solve Problems 1 and  2 
 for  a larger class of simple graphs consisting of all triangle-free and all chordless graphs
 (a graph is chordless if it does not contain chords for any cycle $C$, where a chord for $C$ is an edge that joins two nonconsecutive
vertices of $C$).
Chordless graphs were first studied independently by Dirac \cite{dirac} and Plummer \cite{plummer}
in connection with  minimally 2-connected graphs. 
(a  2-connected graph $G$ is minimally 2-connected, if for any $e\in E(G)$, the graph 
$G-e$ is not 2-connected). 
It can be easily verified that a graph is minimally 2-connected if and only if it is 2-connected and chordless.

It is known  \cite{machado} that  any simple chordless graph $G$ with $\Delta(G)\geq 3$ is Class 1. 
  Note also that there is an infinite set of chordless graphs that are not triangle-free (see, for example, Proposition 3.1 in Section 3).
 
  We prove here the following theorem:

\begin{theorem} 
Let $G$ be a   Class 1 graph that is triangle-free or  chordless. 
Then all $(\Delta(G)+1)$-colorings of $G$ are $K$-equivalent.
\end{theorem}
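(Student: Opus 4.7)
The plan is to follow the inductive scheme of Bonamy et al.\ \cite{bonamy23} and verify that triangle-freeness is invoked only at a few local steps, each of which remains valid when $G$ is instead assumed to be chordless. Since the triangle-free case is exactly the main theorem of \cite{bonamy23}, I may assume from now on that $G$ is chordless. If $\Delta(G)\le 2$ the statement is trivial, and if $\Delta(G)\ge 3$ then $G$ is automatically Class 1 by \cite{machado}, consistent with the hypothesis.

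The key structural substitute for triangle-freeness in a chordless graph is the following \emph{isolation property} of triangles. If $T=\{u,v,w\}$ is a triangle of $G$, then no vertex outside $T$ can be adjacent to two vertices of $T$: a common neighbor $x$ of $u$ and $v$ would make $uv$ a chord of the $4$-cycle $u\,x\,v\,w\,u$. More generally, any cycle of $G$ that meets $T$ in two or more vertices must traverse an edge of $T$ itself. Consequently, each triangle is ``pasted'' onto the rest of $G$ along pairwise disjoint outside-neighborhoods, so from the viewpoint of a bicolored Kempe chain a triangle behaves essentially like three isolated edges.

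The main step is then to traverse the argument of \cite{bonamy23} and check, at each appeal to triangle-freeness, that the isolation property suffices to run the same reasoning. Typically such a step starts from two colorings $\alpha,\beta$ that disagree at some vertex $v$ and shows that one can perform a Kempe swap on a suitable bicolored chain without creating new disagreements; validity of the swap relies on the chain avoiding certain short cycles through $v$. The isolation property guarantees that any short cycle we cannot rule out in the chordless case is confined to a single triangle at $v$, and this is a situation that the local swap can handle explicitly.

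The anticipated obstacle is the neighborhood-analysis step, where one must examine two or three edges of prescribed colors incident to a common vertex $v$. In the triangle-free setting the far endpoints of these edges are automatically pairwise non-adjacent, whereas in a chordless graph two of them may be joined by an edge of a triangle through $v$. Resolving this requires a careful case distinction in which the hypothesis ``no triangle through $v$'' is replaced by ``every triangle through $v$ is isolated,'' and one checks that the prescribed Kempe changes of \cite{bonamy23} still yield a proper coloring. Once this local verification is made, the conclusion of the theorem for chordless $G$ follows from the inductive framework of \cite{bonamy23} unchanged.
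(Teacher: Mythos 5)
Your proposal correctly isolates the right question (where exactly does \cite{bonamy23} use triangle-freeness, and can chordlessness substitute for it?), but the substitute you offer does not do the job, and the verification you defer is precisely where the work lies. The appeals to triangle-freeness in \cite{bonamy23} are of the form: given the ugly edge $uv$ and the two fans $X_u(\alpha,v)=(uv,ux_1,\dots,ux_p)$ and $X_v(\alpha,u)=(vu,vy_1,\dots,vy_q)$, one needs $\{x_1,\dots,x_p\}\cap\{y_1,\dots,y_q\}=\emptyset$ (and similarly $vx_p\notin E(G)$). A violation of these statements is a triangle \emph{containing the edge $uv$ itself}. Your isolation property only constrains how a triangle attaches to the rest of the graph (no outside vertex sees two of its corners); it does not forbid the triangle $\{u,v,x_i\}$ from existing, so it cannot be ``plugged in'' where triangle-freeness was used. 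The paper rules out these adjacencies by a different mechanism: the relevant Kempe chain is a path $P$ through $u$, $v$, $y_1$ and $x_p$, so $P\cup\{ux_p\}$ is a (long) cycle of $G$, and each putative coincidence $x_i=y_j$ or adjacency $vx_p$ forces a specific edge ($uv$, $uy_1$, $ux_p$, or $vx_p$) to be a chord of that cycle or of a cycle assembled from $E(P)$ and fan edges. This is the actual content of Lemma~2.3 and of the new Property~3.6, and none of it follows from your isolation observation.

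There is a second, structural gap: you propose to run ``the inductive framework of \cite{bonamy23} unchanged,'' but that framework first embeds $G$ as an induced subgraph of a $\chi'(G)$-regular triangle-free graph and argues there. For chordless $G$ this regularization is unavailable -- the regular supergraph may acquire chords -- which is exactly why the paper reworks the whole argument directly on $G$, introducing a chosen missing color $m_\alpha(x)$ at every vertex (with the convention that $m_\alpha(x)=1$ only when $1$ is the \emph{only} missing color), the notion of free vertices, and Property~3.1 relating freeness to maximum degree. Without addressing regularization, the reduction to ``a few local steps'' does not get off the ground. So while your high-level strategy matches the paper's in spirit, both the key lemma (the chord-in-a-long-cycle argument) and the necessary restructuring of the induction are missing.
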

 
We essentially follow the proof of \cite{bonamy23} (given for triangle-free graphs) but we apply  their arguments 
directly to the  original graph $G$
while  all proof arguments  in \cite{bonamy23} 
were applied to  a special $\chi'(G)$-regular Class 1  graph 
 that  contains $G$ as an induced subgraph and is triangle-free as $G$ itself. 
 For chordless graphs  such a "method of regularization" may not work because 
 for some chordless graphs $G$
 every 
regular graph   containing $G$ as an induced subgraph, may contain a chord and therefore 
we cannot use  the structure of chordless graphs in our arguments.

Theorem 1.2 implies the following two corollaries.

\begin{corollary}
Let $G$ be a  simple  graph that is triangle-free or  chordless. 
Then all  $(\chi'(G)+1)$-colorings of $G$ are $K$-equivalent.
\end{corollary}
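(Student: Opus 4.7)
The plan is to handle the Class 1 case by a direct appeal to Theorem 1.2 and to reduce the Class 2 case to the Class 1 case via a pendant-edge construction.

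In the Class 1 case $\chi'(G) = \Delta(G)$, so the $(\chi'(G)+1)$-colorings of $G$ coincide with its $(\Delta(G)+1)$-colorings and Theorem 1.2 applies directly.

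In the Class 2 case $\chi'(G) = \Delta(G)+1$, and I need all $(\Delta(G)+2)$-colorings of $G$ to be $K$-equivalent. I will form $G'$ from $G$ by attaching, at each vertex $v \in V(G)$ of maximum degree, a new pendant edge $vw_v$ to a fresh degree-$1$ vertex $w_v$. Since each $w_v$ has degree $1$ in $G'$, no new triangle or cycle can involve $w_v$, so $G'$ inherits triangle-freeness (respectively chordlessness) from $G$. By construction $\Delta(G') = \Delta(G)+1 = \chi'(G)$. To check that $G'$ is Class 1, I extend an optimal $\chi'(G)$-coloring of $G$ to $G'$ by coloring each pendant edge $vw_v$ with a color that is unused at $v$ (such a color exists because $\deg_G(v) = \chi'(G)-1$); this yields a proper $\chi'(G) = \Delta(G')$-coloring of $G'$.

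Theorem 1.2 applied to $G'$ then tells us that all $(\Delta(G')+1) = (\chi'(G)+1)$-colorings of $G'$ are $K$-equivalent. Given any two $(\chi'(G)+1)$-colorings $\alpha_1,\alpha_2$ of $G$, I extend each to a $(\chi'(G)+1)$-coloring $\beta_1,\beta_2$ of $G'$ by coloring every pendant edge $vw_v$ with some color not appearing at $v$ in the corresponding $\alpha_i$, which is possible because $(\chi'(G)+1) - \deg_G(v) = 2$. Then $\beta_1$ and $\beta_2$ are $K$-equivalent in $G'$ by Theorem 1.2.

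The main obstacle will be transferring this $K$-equivalence from $G'$ back to $G$, for which I plan to argue that every Kempe change in $G'$ induces a Kempe change (or no change) in $G$ upon restriction. A Kempe chain $C$ in $G'$ with colors $\{a,b\}$ either lies entirely in $G$ (in which case maximality in $G$ follows immediately from maximality in $G'$), or it contains one or more pendant edges $vw_v$; since each $w_v$ has degree $1$ in $G'$, these pendant edges sit at the ends of $C$, and deleting the pendant vertices leaves a connected bicolored subgraph of $G$ that is still maximal (any $G$-edge of color $a$ or $b$ incident to it would, being an edge of $G'$, already belong to $C$), hence a Kempe chain of $G$. The $G'$-swap restricts exactly to the $G$-swap on this chain, so concatenating the induced changes along the $G'$-sequence produces a sequence of Kempe changes transforming $\alpha_1$ into $\alpha_2$ in $G$.
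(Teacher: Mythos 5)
Your proof is correct and follows essentially the same route as the paper: reduce the Class 2 case to the Class 1 case by embedding $G$ in a Class 1 graph of maximum degree $\Delta(G)+1$ belonging to the same class (triangle-free or chordless), apply Theorem 1.2 there, and push the resulting Kempe changes back down to $G$. The only difference is the auxiliary construction --- you attach a pendant edge at each maximum-degree vertex, while the paper joins two disjoint copies of $G$ by an edge between copies of a single maximum-degree vertex --- and both work; your explicit verification that Kempe chains of the supergraph restrict to Kempe chains of $G$ (or to nothing) is precisely the step the paper leaves implicit.
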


\begin{corollary}
If $G$ is a simple chordless graph  with $\Delta(G)\geq 3$, then all $(\Delta(G)+1)$-colorings of $G$ are $K$-equivalent. 
\end{corollary}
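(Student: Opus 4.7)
The plan is to derive Corollary 1.4 as a one-line consequence of Theorem 1.2 combined with the structural classification of chordless graphs cited from \cite{machado} in the introduction. Indeed, the introduction records that any simple chordless graph $G$ with $\Delta(G)\geq 3$ is Class 1, i.e.\ $\chi'(G)=\Delta(G)$. This is exactly the missing hypothesis needed to invoke Theorem 1.2.

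More precisely, I would proceed as follows. Let $G$ be a simple chordless graph with $\Delta(G)\geq 3$. First, I would apply \cite{machado} to conclude that $G$ is Class $1$. At this point $G$ satisfies both hypotheses of Theorem 1.2: it is a Class $1$ graph, and it is chordless. Second, I would invoke Theorem 1.2 directly, which tells us that all $(\Delta(G)+1)$-colorings of $G$ are $K$-equivalent. This is exactly the conclusion of the corollary.

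There is essentially no obstacle: the whole substance lies in Theorem 1.2 itself, while the classification of \cite{machado} merely bridges the phrasing ``chordless with $\Delta(G)\geq 3$'' to the phrasing ``Class $1$ chordless.'' The role of the assumption $\Delta(G)\geq 3$ is only to guarantee Class $1$ membership via \cite{machado}; it is not otherwise used. For completeness one could note that the statement is vacuous or trivial in the small cases excluded by $\Delta(G)\leq 2$ (where a chordless graph is a disjoint union of paths and induced cycles, and the Kempe-equivalence of any two $(\Delta+1)$-colorings can be checked by hand), but these are not needed for the corollary as stated.
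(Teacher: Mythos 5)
Your proposal is correct and matches the paper's own argument exactly: the paper also derives Corollary 1.4 in one line by citing \cite{machado} for the fact that $\chi'(G)=\Delta(G)$ when $G$ is simple, chordless, and $\Delta(G)\geq 3$, and then applying Theorem 1.2. Nothing further is needed.
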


\section{Definitions and preliminary results}

Let  $G$ be a  simple  graph with maximum degree $\Delta$.  Consider 
a  $(\Delta + 1)$-coloring $\alpha$ of $G$. We  use here the notation and terminology  from \cite{bonamy23}.

	For  any two (distinct) colors $c$ and $d$, we denote by $K_\alpha(c, d)$ the subgraph of G induced by the edges colored $c$ or $d$ under coloring $\alpha$.

For a vertex $v \in V(G)$, we say that  a color $c\in \{1,2,...,\Delta+1\}$, {\em is missing} in $\alpha$ at $v$ if 
there is no 
 edge $e$ incident to $v$ with $\alpha(e) = c$. Furthermore, with each vertex $x\in V(G)$ we associate a  color, denoted
$m_\alpha(x)$, that is missing  in $\alpha$ at $x$.

For each  $u\in V(G)$ we define a directed graph $D_u(\alpha)$ on vertex set $\{uw :  w\in N(u)\}$, where a 
vertex $uw$ has a directed edge to $ux$ if $m_\alpha(w) = \alpha(ux)$.
	
\begin{definition}	
An $\alpha$--fan	at a vertex $u$ is a sequence of edges $(ux_0,ux_1,...,ux_{p})$ incident with $u$ 
such that 
$m_\alpha(x_i)=\alpha(ux_{i+1})$ for $i=0,1,...,p-1$, and either the color $m_\alpha(x_{p})$ is missing at $u$,  or 
 $m_\alpha(x_p)\in\{\alpha(ux_0),...,\alpha(ux_{p-1})\}$.
 
 For each edge $uv$ there is the unique $\alpha$--fan $(ux_0,ux_1,...,ux_{p})$ where $ux_0=uv$. This
 $\alpha$-fan we denote  by $X_u(\alpha,v)$. 
 \end{definition}

 Consider an  $\alpha$-fan  $X_u(\alpha,v)=(ux_0,ux_1,...,ux_{p})$.
 The sequence of edges in $X_u(\alpha,v)$ corresponds to a sequence of vertices in $D_u(\alpha)$.
 So we can also refer to $X_u(\alpha,v)$ as a sequence of vertices in $D_u(\alpha)$.
  The following  cases are possible: \medskip

  1) The color $m_\alpha(x_{p})$ is missing  at $u$. Then $X_u(\alpha,v)$ is a path in $D_u(\alpha)$.
  
  2) $p\geq 1$ and $m_\alpha(ux_{p})=\alpha(ux_0)$. Then $X_u(\alpha,v)$ is a cycle in $D_u(\alpha)$. 
  
  3) $p\geq 2$ and 
  $m_\alpha(x_p)\in\{\alpha(ux_1),...,\alpha(ux_{p-1})\}$.
  Then we say that $X_u(\alpha,v)$ is a  {\it comet} in
   $D_u(\alpha)$.

 \begin{definition}
  If  $X_u(\alpha,v)$ is a path or cycle in $D_u(\alpha)$,
  we denote by $X_u^{-1}(\alpha,v)$
the coloring $\alpha'$ obtained from $\alpha$ as follows: $\alpha'(ux_i)=m_\alpha(x_i)$, for  $i=0,1,...,p$, and $\alpha'(e)=\alpha(e)$ for any other edge  $e\in E(G)$.  We put $m_{\alpha'}(x_i)=\alpha(ux_i)$ for  $i=0,1,...,p,$ and  $m_{\alpha'}(z)=m_\alpha(z)$ for every  
$z\notin\{u,x_0,...,x_p\}$. Furthermore, we put $m_{\alpha'}(u)=\alpha(ux_0)$
if $X_u(\alpha,v)$ is a path, and   $m_{\alpha'}(u)=m_\alpha(u)$  if $X_u(\alpha,v)$ is a cycle in $D_u(\alpha)$. 
\end{definition}

\begin{lemma}
Let $X_u(\alpha,v)=(ux_0,ux_1,...,ux_{p})$ be a path in $D_u(\alpha)$.
Then the colorings $\alpha$ and $X_u^{-1}(\alpha,v)$ of $G$ are $K$-equivalent.
\end{lemma}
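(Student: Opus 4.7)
The idea is to realize $X_u^{-1}(\alpha,v)$ from $\alpha$ through $p+1$ elementary Kempe changes, processing the fan from its tip back to its root. Introduce the colors $c_i\defeq\alpha(ux_i)$ for $i=0,\dots,p$ and $c_{p+1}\defeq m_\alpha(x_p)$. The definition of an $\alpha$-fan gives $m_\alpha(x_i)=c_{i+1}$ for $i=0,\dots,p-1$, while the path hypothesis on $X_u(\alpha,v)$ means that $c_{p+1}$ is missing at $u$; since $c_0,\dots,c_p$ are the colors of distinct edges at $u$, the colors $c_0,c_1,\dots,c_{p+1}$ are pairwise distinct.

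The procedure I propose is the following: for $i=p,p-1,\dots,0$ in this order, perform the $K$-change on the Kempe chain of $K_\beta(c_i,c_{i+1})$ that contains the edge $ux_i$, where $\beta$ denotes the current coloring (initially $\beta=\alpha$). The key claim, to be verified by induction on the number of swaps already performed, is that each such Kempe chain consists of the single edge $ux_i$; consequently the swap only recolors $ux_i$ from $c_i$ to $c_{i+1}$, and at each stage the edges $ux_k$ with $k>i$ have already received their target color $c_{k+1}=m_\alpha(x_k)$, all other edges retain their $\alpha$-color, and $c_{i+1}$ is missing at $u$.

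To verify the single-edge claim at stage $i$: color $c_{i+1}$ is missing at $u$ by the invariant, and at $x_i$ the color $c_{i+1}=m_\alpha(x_i)$ is still missing because the earlier swaps only altered edges $ux_k$ with $k>i$, none of which is incident to $x_i$ (the $x_k$ being pairwise distinct). Together with the distinctness of $c_0,\dots,c_{p+1}$, this shows that $c_{i+1}$ is missing at both endpoints of $ux_i$, while $c_i$ appears only on $ux_i$ at $u$ and at $x_i$; thus the Kempe chain in $K_\beta(c_i,c_{i+1})$ containing $ux_i$ is exactly $\{ux_i\}$. Swapping it recolors $ux_i$ from $c_i$ to $c_{i+1}$ and reinstates the invariant with $c_i$ now missing at $u$.

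After the $(p+1)$-st swap one obtains $\beta(ux_i)=c_{i+1}=m_\alpha(x_i)$ for every $i=0,\dots,p$, with all other edges unchanged; this is precisely the coloring $X_u^{-1}(\alpha,v)$, so $\alpha$ and $X_u^{-1}(\alpha,v)$ are $K$-equivalent. The only delicate step---which is where the hypothesis that $X_u(\alpha,v)$ is a \emph{path} (ruling out the cycle and comet cases) is used---is the pairwise distinctness of the $c_i$ that keeps each intermediate Kempe chain trivial; everything else is bookkeeping.
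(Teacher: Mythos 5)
Your proof is correct and takes essentially the same route as the paper's: recolor $ux_p, ux_{p-1},\dots,ux_0$ in that order, each step being a $K$-change on a trivial single-edge Kempe chain. You merely spell out the bookkeeping (pairwise distinctness of the colors and the invariant that the target color is missing at both endpoints of the edge being recolored) that the paper leaves implicit.
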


\begin{proof}
This is evident if $p=0$.
If $p\geq 1$, we can transform the coloring $\alpha$ to the coloring $X_u^{-1}(\alpha,v)$ 
  by a sequence of $K$-changes
  as follows: recolor $ux_p$ with $m_\alpha(x_p)$, then  recolor 
  $ux_{p-1}$ with $m_\alpha(x_{p-1})$, . . . , recolor $ux_1$ with $m_\alpha(x_1)$ and, finally, recolor $ux_0$ with 
  $m_\alpha(x_0)$.   
\end{proof}

\begin{definition}
  For any vertex $u$ and a cycle   $X_u(\alpha,v)=(ux_0,ux_1,...,ux_{p})$  in $D_u(\alpha)$, we say that $X_u(\alpha,v)$
 is {\it saturated} if for every $0\leq i\leq p$, the component of $K_\alpha(\alpha(ux_i), m_\alpha(u))$ containing 
 $u$ also contains 
$x_{i-1}$ (resp. $x_p$ if $i = 0$).
\end{definition} 
 
The next result is obtained by  Bonamy et al. \cite{bonamy23}.  
For the convenience of the reader we give their proof here.

\begin{lemma} [\cite{bonamy23}] 
For any vertex $u$ and non-saturated cycle $X_u(\alpha,v)$  in $D_u(\alpha)$,
the colorings  $\alpha$ and $X_u^{-1}(\alpha,v)$ are $K$-equivalent.
\end{lemma}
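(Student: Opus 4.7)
The plan is to exploit non-saturation to make one Kempe change that breaks the cyclic fan $X_u(\alpha,v)$ into a path fan, then invoke Lemma~2.1, then recognize a second path fan and invoke Lemma~2.1 again, and finally undo a residual bicolored component by one last Kempe change, landing on $X_u^{-1}(\alpha,v)$.

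Set $a = m_\alpha(u)$ and write $X_u(\alpha,v) = (ux_0, \dots, ux_p)$ with $v = x_0$; all indices below are read modulo $p+1$. By non-saturation, fix $j$ such that the component $C$ of $K_\alpha(\alpha(ux_j), a)$ containing $u$ does not contain $x_{j-1}$. Because $a$ is missing at $u$ and because $ux_j$ is the only edge of $u$ of color $\alpha(ux_j)$, the chain $C$ meets $u$ only through $ux_j$. Let $\beta$ be obtained from $\alpha$ by swapping colors on $C$; then $\alpha$ and $\beta$ are $K$-equivalent, $\beta(ux_j) = a$, $\beta(ux_i) = \alpha(ux_i)$ for $i \ne j$, and $m_\beta(u) = \alpha(ux_j)$. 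Among the fan indices, $m_\alpha(x_i) \in \{\alpha(ux_j), a\}$ only when $i = j-1$ (where $m_\alpha(x_{j-1}) = \alpha(ux_j)$); since by choice $x_{j-1} \notin C$, we may take $m_\beta(x_i) = m_\alpha(x_i)$ for every $i \ne j$. Hence $X_u(\beta, v) = (ux_0, \dots, ux_{j-1})$ is a path in $D_u(\beta)$, terminating because $m_\beta(x_{j-1}) = \alpha(ux_j) = m_\beta(u)$, and by Lemma~2.1 $\beta$ is $K$-equivalent to $\gamma \defeq X_u^{-1}(\beta, v)$.

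In $\gamma$ the edges $ux_0, \dots, ux_{j-1}$ now carry the shifted colors $\alpha(ux_1), \dots, \alpha(ux_j)$, the edges $ux_j, \dots, ux_p$ and the vertices $x_j, \dots, x_p$ (with their non-$u$ incidences) are unchanged from $\beta$, and $m_\gamma(u) = \alpha(ux_0)$ by Definition~2. Hence $X_u(\gamma, x_j) = (ux_j, ux_{j+1}, \dots, ux_p)$ is a second path in $D_u(\gamma)$, terminating because $m_\gamma(x_p) = \alpha(ux_0) = m_\gamma(u)$, and a second application of Lemma~2.1 shows $\gamma$ is $K$-equivalent to $\delta \defeq X_u^{-1}(\gamma, x_j)$. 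On the fan edges $\delta$ now realizes the full cyclic shift $\alpha(ux_i) \mapsto \alpha(ux_{i+1})$ and so agrees with $X_u^{-1}(\alpha, v)$; on every other edge it agrees with $\alpha$ except on $C - \{u, ux_j\}$, where colors are still swapped. In $\delta$ the vertex $u$ has no edge of color $a$ at all (its only candidate $ux_j$ was just recolored to $\alpha(ux_{j+1})$), so the component of $K_\delta(\alpha(ux_j), a)$ containing $x_j$ coincides with $C - \{u, ux_j\}$ and does not reach $u$. One last Kempe change on this component restores the original $\alpha$-colors on those edges and yields $X_u^{-1}(\alpha, v)$, giving a chain of $K$-equivalences from $\alpha$ through $\beta,\gamma,\delta$ to $X_u^{-1}(\alpha, v)$.

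The most delicate point is the fan-preservation claim used to identify $X_u(\beta, v)$ as a path: after swapping $C$, each intermediate designated missing color $m_\alpha(x_i)$ for $i \ne j$ must still be missing at $x_i$ in $\beta$. This reduces to the single color $\alpha(ux_j)$, which occurs as a designated missing color of the fan only at $i = j-1$, and the non-saturation hypothesis at $j$ is precisely the statement that this one critical vertex $x_{j-1}$ is kept outside $C$.
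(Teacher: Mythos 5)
Your proof is correct, and while it shares the paper's overall skeleton (one preparatory Kempe change exploiting non-saturation, then Lemma~2.1, then one corrective Kempe change), it executes the key step differently. The paper relabels the cycle so that the non-saturated index is $0$ (harmless, since for a cycle $X_u^{-1}(\alpha,\cdot)$ gives the same coloring regardless of the starting edge) and then swaps the component of $K_\alpha(\alpha(ux_0),m_\alpha(u))$ containing $x_p$, i.e.\ the component \emph{away} from $u$: this leaves every edge at $u$ untouched, changes only the designated missing color at $x_p$ to $m_\alpha(u)$, and so turns the whole cyclic fan into a single path; one application of Lemma~2.1 and one final swap of $C\cup\{ux_p\}$ finish the argument. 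You instead swap the component \emph{through} $u$, recoloring $ux_j$ to $m_\alpha(u)$ and changing $u$'s missing color to $\alpha(ux_j)$, which splits the fan into the two paths $(ux_0,\dots,ux_{j-1})$ and $(ux_j,\dots,ux_p)$ and costs two applications of Lemma~2.1 plus a more delicate identification of the residual component at the end. Your version avoids the relabelling; the paper's is shorter. Two small points to tighten: first, at $j=0$ your two-path decomposition degenerates (the first ``path'' is the entire fan and the second is empty), so that boundary case deserves an explicit sentence; second, in the last step the fact that the component of $K_\delta(\alpha(ux_j),m_\alpha(u))$ containing $x_j$ does not reach $u$ is not quite a consequence of $u$ having no edge of color $m_\alpha(u)$ alone --- $u$ does have the edge $ux_{j-1}$ of color $\alpha(ux_j)$ in $\delta$ --- but rather of the facts that $ux_j$ has left the bicolored subgraph and that $x_{j-1}\notin C$, so the only new bicolored edge at $u$ leads outside $C$. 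That is the same non-saturation hypothesis you already invoked, so the gap is purely expository.
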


\begin{proof}
Let $X_u(\alpha,v) = (ux_0,...,ux_p)$ be a non-saturated cycle in $D_u(\alpha)$. Without loss of generality, we can assume that the component of $K_\alpha(\alpha(ux_0),m_\alpha(u))$ containing $u$ does not contain an edge incident with $x_p$. Furthermore, we have $m_\alpha(x_i) = \alpha(ux_{i+1})$ for every $0\leq i<p$, and 
 $m_\alpha(x_p) = \alpha(ux_0)$. We consider the coloring $\alpha'$ obtained from $\alpha$ by swapping the component $C$ of 
 $K_\alpha(\alpha(ux_0),m_\alpha(u))$ containing $x_p$. By assumption, this has no impact on the colors of the edges incident with $u$.
Therefore we have $m_{\alpha'}(x_i) = m_{\alpha} (x_i)$ for every $0\leq i<p$, as well as $m_\alpha'(u) = m_{\alpha}(u)$.
Since the color $m_\alpha(u)$ is missing now at $x_p$, we can put  
 $m_{\alpha'}(x_p) = m_\alpha(u)$.
Now the sequence $X_u(\alpha', x_0) = (ux_0, ..., ux_p)$ is a path in $D_u(\alpha')$. Hence, by Lemma 2.1,
 the colorings $\alpha'$ and $\alpha''=X_u^{-1}(\alpha',x_0)$
 are $K$-equivalent.
In the coloring $\alpha''$, let $C'$ be the component of $K_{\alpha''}(\alpha(ux_0), m_\alpha(u))$ containing $x_p$.
 We have that  $C'= C\cup \{ux_p\}$, and that $m_{\alpha''}(u) = \alpha(ux_0)$, so it suffices to swap $C'$ to obtain 
 $X_u^{-1}(\alpha,x_0)$. Therefore  $X_u^{-1}(\alpha,x_0)$ is $K$-equivalent to $\alpha''$.
Since $\alpha''$ is $K$-equivalent to $\alpha'$ and  $\alpha'$ is $K$-equivalent to $\alpha$, 
we obtain that $\alpha$ and $X_u^{-1}(\alpha,x_0)$
are $K$-equivalent, as desired.
\end{proof}

Bonamy et al.  obtained  in \cite{bonamy23}  the following result:\medskip

Let $G$ be a  $\Delta$-regular Class 1 graph, and $\alpha$ be a $(\Delta+1)$-coloring of $G$.
For any vertex $u$ and saturated cycle $X_u(\alpha, v)$ in $D_u(\alpha)$, 
if $G$ is  triangle-free,  and if  $X_v (\alpha, u)$  is a cycle in  $D_v(\alpha)$,
  then  $\alpha$ and  
$X^{-1}_u(\alpha,v)$  
are $K$-equivalent.\medskip

Using their  proof  in \cite{bonamy23}, we obtain here the following result:

\begin{lemma} 
Let $G$ be a  chordless or triangle-free simple graph with maximum degree $\Delta$, and $\alpha$ be a $(\Delta+1)$-coloring of $G$.
For any vertex $u$ and saturated cycle $X_u(\alpha, v)$ in $D_u(\alpha)$, 
 if the sequence $X_v (\alpha, u)=(vu,vy_1,...,vy_q)$  is a cycle in  $D_v(\alpha)$ and $q\geq 2$,
  then the colorings $\alpha$ and  
$X^{-1}_u(\alpha,v)$  
are $K$-equivalent.
\end{lemma}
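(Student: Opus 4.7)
Set $a = \alpha(uv)$ and $b = m_\alpha(u)$. From the hypotheses I immediately read off that $\alpha(ux_1) = m_\alpha(v)$, $\alpha(vy_1) = b$, $m_\alpha(y_q) = a$, and from the saturation of $X_u(\alpha,v)$ at $i=0$ and $i=1$ there is a bichromatic $(u,v)$-path in $K_\alpha(m_\alpha(v),b)$ as well as a bichromatic $(u,x_p)$-path in $K_\alpha(a,b)$ passing through $v$. The plan is to mimic the Kempe-chain argument of Bonamy et al.\ for the analogous statement in $\Delta$-regular triangle-free graphs, but to apply it directly to $G$ rather than to the triangle-free regularization in which they embed $G$.

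I would proceed in two stages. In the first stage, using the cycle structure of $X_v(\alpha,u)$ -- where the hypothesis $q \geq 2$ is needed so that the intermediate edges $vy_1, vy_2$ are actually present -- I would perform a carefully chosen Kempe swap at $v$: either invert the fan at $v$ directly via \cref{lemma:2.1} or \cref{lemma:2.2} when possible, or else swap a single bichromatic component touching $v$ so as to reduce to that case. In the second stage, after showing that $X_u(\alpha',v)$ in the resulting coloring $\alpha'$ is either a path or a non-saturated cycle at $u$, I would invert it via Lemma~2.1 or Lemma~2.2, and if necessary undo the first-stage swap to recover $X_u^{-1}(\alpha,v)$ up to Kempe-equivalence.

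The principal obstacle is verifying that the Kempe swap in the first stage behaves correctly: it should neither recolor an edge at $u$ that must be preserved, nor corrupt the saturation data at $u$ in an unintended way. Bonamy et al.\ handle this in the triangle-free regular setting by arguing that certain pairs of vertices among the $x_i$'s, $y_j$'s, and the vertices along the bichromatic paths cannot be adjacent, since any such adjacency would form a triangle. This transfers verbatim when $G$ is triangle-free. In the chordless case triangles are permitted, so the argument must be replaced; however, the edges of the fans at $u$ and $v$, together with the bichromatic $(u,v)$-path of length at least two supplied by saturation, form a cycle in $G$ of length at least four, and any of the forbidden adjacencies would appear as a chord of that cycle, contradicting that $G$ is chordless. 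Carrying out this substitution at the handful of places where Bonamy et al.\ invoke triangle-freeness is exactly the "small modification" advertised in the introduction, and it completes the proof.
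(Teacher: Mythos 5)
Your overall strategy is the right one and is the same as the paper's: run the Bonamy et al.\ saturated-cycle argument directly on $G$, and wherever they invoke triangle-freeness to rule out a coincidence among the fan vertices, substitute a chord argument. You have also correctly located where chordlessness enters: the cycle is $P\cup\{ux_p\}$, where $P$ is the $(u,x_p)$-component of $K_\alpha(\alpha(uv),m_\alpha(u))$ supplied by saturation at $i=0$ (it passes through $v$ and $y_1$). Be aware, though, that what must be ruled out are coincidences $x_i=y_j$ rather than adjacencies, and that establishing the full disjointness $\{x_1,\dots,x_p\}\cap\{y_1,\dots,y_q\}=\emptyset$ takes three separate chord arguments (producing the chords $vx_p$, $uy_1$, and $uv$, the last inside a cycle obtained from $P$ by rerouting through $x_i$); this disjointness is what guarantees $y_q\notin P$, so that the component $C$ of $K_\alpha(\alpha(uv),m_\alpha(u))$ containing $y_q$ avoids all fan edges and can be swapped harmlessly. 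Your one-sentence version of this would need to be expanded, but it is the right idea.

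The genuine gap is in your second stage. You propose to ``show that $X_u(\alpha',v)$ in the resulting coloring is either a path or a non-saturated cycle'' and then invert it and undo the first swap. That cannot be shown: the first swap is of the component $C$ containing $y_q$ (which does not touch $v$ at all, so it is not really a swap ``at $v$''), and after it $X_u(\alpha_1,v)$ is still a cycle that may perfectly well remain saturated. The case where it stays saturated is the heart of the lemma and is not dispatched by ``invert and undo'': the paper needs a further chain of five colorings $\alpha_2,\dots,\alpha_6$, namely swapping a second component $C'$ of $K_{\alpha_1}(m_{\alpha_1}(u),m_{\alpha_1}(v))$ at $y_q$, inverting the fan at $v$ (now a path), inverting the residual fan $(ux_1,\dots,ux_p)$ at $u$, swapping $C'\cup\{uv,vy_q\}$, inverting a now non-saturated cycle at $v$ --- this is the one place the hypothesis $q\geq 2$ is actually used --- and finally swapping $C$ back. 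As written, your plan only handles the analogue of the paper's Case 1 (the cycle de-saturates after the first swap), so it does not yet constitute a proof.
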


\begin{proof}
 Let $X_u(\alpha,v) = (uv,ux_1,...,ux_p$). 
 Observe that $m_\alpha(v)\not= m_\alpha(u)$, otherwise $X_v(\alpha,u)$ and $X_u(\alpha,v)$ contain only 
 the edge $uv$ and are not cycles in $D_v({\alpha})$ and $D_u(\alpha)$, respectively.  Also note that 
 $m_\alpha(x_p) = m_\alpha(y_q) = \alpha(uv)$. The proposition of the lemma is evident if $p=1$. 
 We consider now the case $p\geq 2$.

Since $X_u(\alpha,v)$ is saturated, the component of $K_\alpha(\alpha(uv), m_\alpha(u))$ containing $u$ also contains the vertex $x_p$.
 By definition of $X_v(\alpha,u)$, $\alpha(vy_1) = m_\alpha(u)$ and thus $y_1$ is in the same component  of 
$K_\alpha(\alpha(uv),m_\alpha(u))$ as $u$ and $x_p$.   Denote this component by $P$. Clearly, $P$  contains the edges $uv$ and  $vy_1$ but not  $ux_p$. 

We will  show that $\{x_1,...,x_p\}\cap \{y_1,...,y_q\}=\emptyset$. 
This is evident if $G$ is triangle-free.
Assume now that $G$ is chordless.
 The path $P$ together with the edge $ux_p$ forms a cycle in $G$. This implies that $x_p\not= y_j$ for each $j=2,...,q$,  since otherwise the edge $vx_p$ is a chord 
 in the cycle $P\cup \{ux_p\}$. Furthermore,  $x_i\not= y_1$ for each $i=1,...,p-1$, since otherwise the edge $uy_1$ is a chord 
 in the cycle $P\cup \{ux_p\}$. Finally, $x_i\not= y_j$ for each pair $i,j$, where $1\leq i\leq p-1$ and  $2\leq j\leq q$, since otherwise the edge $uv$ is a chord  in the cycle induced by the set of edges $(E(P)\setminus \{uv\})\cup \{vx_i,x_iu,ux_p\}$.
 Thus  $\{x_1,...,x_p\}\cap \{y_1,...,y_q\}=\emptyset$.

Since $x_p\not=y_q$, $m_\alpha(y_q)=\alpha(uv)$ and $P$ contains $u, y_1$ and $x_p$, the path $P$  does not contain $y_q$. 
Let $C$ be the component of $K_\alpha(\alpha(uv),m_\alpha(u))$ containing $y_q$. 
Clearly, 
 $E(C)\cap \{uv,ux_1,...,ux_p,vy_1,...,vy_q\}=\emptyset$ 
and neither endpoint of $C$ is incident to an edge in 
$\{uv,ux_1,...,ux_p,vy_1,...,vy_{q-1}\}$,
as the only vertices missing colors  $\alpha(uv)$ or $m_\alpha(u)$ in 
$\{u,v,x_1,...,x_p,y_1,...,y_q\}$ are by definition $u, x_p$ and $y_p$, since $X_u(\alpha,v)$ 
and $X_v(\alpha,u)$ are cycles in $D_u(\alpha)$ and $D_v(\alpha)$, respectively. We consider the coloring $\alpha_1$ obtained from $\alpha$ by swapping $C$. 
We have that
 $\alpha_1(ux_i) = \alpha(ux_i)$ and $m_{\alpha_1}(x_i) = m_{\alpha}(x_i)$  for every  $i=0,1,...,p$;
  similarly we have  that   $\alpha_1(vy_j) = \alpha(vy_j)$  for every $y_j$ with 
 $0\leq j\leq q-1$.   
  Since the color $m_\alpha(u)$ is missing now at $y_q$, we  put  $m_{\alpha_1}(y_q) = m_\alpha(u)$.

Clearly,  the sequence $X_u(\alpha_1,v)$  is a  cycle in $D_u(\alpha_1)$.
 However, $X_u(\alpha_1,v)$ may not saturate in $\alpha_1$. 
 We distinguish the two cases.\medskip

{\bf Case 1}.  $X_u(\alpha_1,v)$ is not saturated in $\alpha_1$. 
By Lemma 2.2, the colorings $\alpha_1$ and  $X_u^{-1}(\alpha_1,v)$ are $K$-equivalent.  By
swapping $C$ for the second time (remember that $E(C)\cap \{uv,ux_1,...,ux_p,vy_1,...,vy_q\}=\emptyset$, and that
neither endpoint of $C$ is incident to an edge in $\{uv,ux_1,...,ux_p,vy_1,...,vy_{q-1}\}$), we obtain $X_u^{-1}(\alpha,v)$, hence 
the conclusion.

{\bf Case 2}.   $X_u(\alpha_1,v)$ is saturated in $\alpha_1$. 
Hence the component of the subgraph $K_{\alpha_1}(m_{\alpha_1} (u), m_{\alpha_1} (v))$
containing $u$ also contains $v$ thus does not contain $y_q$, since $m_{\alpha_1} (y_q ) = m_{\alpha_1} (u)$.
Let $C'$ be the component of $K_{\alpha_1}(m_{\alpha_1} (u), m_{\alpha_1} (v))$ containing $y_q$. 
Similarly as for $C$, we note that 
 $E(C')\cap \{uv,ux_1,...,ux_p,vy_1,...,vy_q\}=\emptyset$ 
and neither endpoint of $C'$ is incident to  edges in $\{uv,ux_1,...,ux_p,vy_1,...,vy_{q-1}\}$.

 We consider the coloring $\alpha_2$ 
 obtained from $\alpha_1$ by swapping $C'$. Clearly, we can put now $m_{\alpha_2}(y_q )=m_{\alpha_1}(v)$.
 Then the sequence $X_v(\alpha_2,u)=(vu,vy_1,...,vy_q)$ is a path in $D_v(\alpha_2)$.
  Let 
 $\alpha_3=X_v^{-1}(\alpha_2,u)$.  Then, by Definition 2,  $m_{\alpha_3}(u)=\alpha(uv)$, $m_{\alpha_3}(v)=\alpha(uv)$
 and $\alpha_3(uv)=\alpha_2(vy_1)=m_\alpha(u)$.
 By Lemma 2.1,  the colorings $\alpha_2$ and  $\alpha_3$ are $K$-equivalent.  Note that  
  $\alpha_3$ assigns the color $\alpha(uv)$ to no edge in $ \{uv,ux_1,...,ux_p,vy_1,...,vy_q\}$
  and $\alpha_3(ux_1)=\alpha_2(ux_1)=m_\alpha(v)$.  

  Since $m_{\alpha_3}(u)=\alpha(uv)=m_{\alpha_3}(x_p)$,  the sequence  $X_u(\alpha_3,x_1)=(ux_1,...,ux_p)$ is a path in $D_u(\alpha_3)$.
   Let $\alpha_4=X_u^{-1}(\alpha_3,x_1)$. In the coloring $\alpha_4$, we have $m_{\alpha_4}(v)=\alpha(uv)$
   and $m_{\alpha_4}(u)=m_{\alpha}(v)$, with $\alpha_4(uv)=m_\alpha(u)$.
 By Lemma 2.1,  $\alpha_3$ and  $\alpha_4$ are $K$-equivalent.   
 Since  $\alpha_4(uv) = m_\alpha(u)$,  there is a 
 unique  component of $K_{\alpha_4}(m_\alpha(u),m_\alpha(v))$ containing vertices of $C'$, which is precisely $C'\cup \{uv, vy_q\}$.

In the coloring $\alpha_5$ obtained from $\alpha_4$ by swapping $C'\cup \{uv,vy_q\}$,
put $m_{\alpha_5}(u)=m_\alpha(u)$.  There is a unique component of 
$K_{\alpha_5}(\alpha(uv),m_\alpha(u))$ containing vertices of $C$, which is precisely $C\cup \{vy_q\}$. 
Moreover, in the coloring $\alpha_5$, the sequence
$X_v(\alpha_5,y_1)=(vy_1, vy_q, vy_{q-1}, . . . , vy_2)$ is a cycle  in $D_v(\alpha_5)$. 
The cycle is not saturated since the component 
of $K_{\alpha_5}(\alpha(uv),m_\alpha(u))$ containing vertices of $C$ is precisely $C\cup \{vy_q\}$: since $q\geq 2$, it does not contain $y_1$. 
We consider the coloring $\alpha_6=X_v^{-1}(\alpha_5,y_1)$.
By Lemma 2.2,  
$\alpha_5$ and $\alpha_6$ are $K$-equivalent. Note that in 
$\alpha_6$, the component of $K_{\alpha_6}(\alpha(uv), m_\alpha(u))$ containing vertices of $C$ is precisely C: we swap it and obtain 
that $\alpha$ and  $X_u^{-1}(\alpha,v)$ are $K$-equivalent, as desired.
 \end{proof}

 \section{Proofs of main results}

 \begin{proof}[{\bf Proof of Theorem 1.2.}]
 The theorem is true 
  for  all Class 1  graphs with  maximum degree $2$.
  Assume that
 the theorem is true for all Class 1 graphs  with maximum degree $\Delta-1$ which are   triangle-free or chordless,  $\Delta\geq 3$. 
 
  Let $G$ be a  Class 1 graph  with maximum degree $\Delta$ that  is triangle-free or chordless, and let  
$\gamma$ be a   $\Delta$-coloring and $\beta$ a $(\Delta+1)$-coloring of $G$.

Denote by  $\cal C$  the set of all  $(\Delta+1)$-colorings of $G$ that are $K$-equivalent to  $\beta$. 

If there is a coloring  $\beta'  \in \cal C$  with $M(\beta',1) = M(\gamma,1)$, then the graph $G'=G-M(\gamma,1)$ 
has the maximum degree $\Delta-1$ and is Class 1. Therefore,
by the induction hypothesis,  the restrictions of $\beta'$ and $\gamma$ to $G'$ can be transformed to each other
by a series of $K$-changes.

Suppose now that $M(\beta',1)\not= M(\gamma,1)$ for each $\beta'  \in \cal C$.
We will show that this leads to a contradiction. We essentially follow the outline of \cite{bonamy23}.

In a given $(\Delta+1)$-coloring from $\cal C$, we say an edge is:
\begin{itemize}	
\item good if  it belongs to $M(\gamma,1)$ and is colored 1, 
\item bad if  it belongs to $M(\gamma,1)$ but is not colored 1, 
\item ugly if  it is not in $M(\gamma,1)$ but is colored 1.
\end{itemize}

Consider in $\cal C$ a coloring $\alpha$ which minimizes the number of ugly edges among the colorings  in $\cal C$
that minimize the number of bad edges. We call such a coloring {\it minimal}.
With each vertex $x\in V(G)$ we associate a  color from the set $\{1,2,...,\Delta+1\}$, denoted
$m_{\alpha}(x)$, that is missing  in $\alpha$ at $x$ with an additional 
	condition  that  $m_\alpha(x)=1$ if and only if  $1$ is the only color missing at $x$. 
	
	A vertex $x$ is called {\it free} if $m_\alpha(x)=1$.

	Note some  properties of the coloring $\alpha$. The first two of them are evident.
	
\begin{property} 
\label{property1}	
The degree of every free vertex of $G$  is $\Delta$.
\end{property}

\begin{property} 
\label{property2}
Every bad edge  is adjacent to an ugly edge.
\end{property}

\begin{property} [\cite{bonamy23}]
\label{property3} 
If $uv$ is an ugly edge, then the sequences $X_u(\alpha,v)$ and $X_v(\alpha,u)$ are cycles in $D_u(\alpha)$ and
$D_v(\alpha)$, respectively.
\end{property}

\begin{proof}
Since $uv$ is ugly,  $\alpha(uv)=1\not=\gamma(uv)$. Let $X_u(\alpha,v) = (ux_0, . . . , ux_p)$  and $m_\alpha(x_p)\not=1$, that is,
$X_u(\alpha,v)$ is not a cycle in $D_u(\alpha)$. We show that then there is a  
 $(\Delta+1)$-coloring $\alpha'$ of $G$ such that $\alpha$ and $\alpha'$ are  $K$-equivalent 
 and
  \begin{equation}
M(\alpha',1)=M(\alpha,1)\setminus\{ux_0\}.
\end{equation}
The following cases are possible:

Case 1.  $X_u(\alpha,v)$ is a path  in $D_u(\alpha)$, that is, the color $m_\alpha(x_p)$ is missing in $\alpha$ at $x_p$.
Then  $\alpha'=X_u^{-1}(\alpha,v)$ is $K$-equivalent to $\alpha$ and satisfies (1).\medskip

Case 2. $X_u(\alpha,v)$ is  a comet in $D_u(\alpha)$, that is,  $m_\alpha(x_p) = m_\alpha(x_{q-1}) = \alpha(ux_q)$
for some $0<q<p$. We swap the component $C$ 
  of $K(m_\alpha(u),\alpha(ux_q))$ containing the edge $ux_q$, and denote by $\alpha_1$ the resulting coloring. 
  
   If $C$ does not contains the vertex $x_{q-1}$, then $\alpha_1(ux_i)=\alpha(ux_i)$ for $i=0,...,q-1$,
  and the color $\alpha(ux_q)$ is missing in $\alpha_1$ at $u$. Put $m_{\alpha_1}(u)=\alpha(ux_q)$ and 
  $m_{\alpha_1}(x_{i})=m_\alpha(x_i)$ 
  for $i=1,...,q-1$. Then  the sequence $X_u(\alpha_1,v) = (ux_0, . . . , ux_{q-1})$ is a path in $D_u(\alpha_1)$
 and  $\alpha'=X_u^{-1}(\alpha_1,v)$  is $K$-equivalent to $\alpha$ and satisfies (1). 

 If   $C$ contains the vertex $x_{q-1}$, then $\alpha_1(ux_q)=m_\alpha(u)$ and $\alpha_1(ux_i)=\alpha(ux_i)$ for all $i\not=q$,
 $1\leq i\leq p$. Furthermore,
   the color $\alpha(ux_q)=m_\alpha(x_p)$ is missing at $u$ and the color $m_\alpha(u)$ is missing at $x_{q-1}$. 
   Put $m_{\alpha_1}(u)=m_\alpha(x_p)$,
$m_{\alpha_1}(x_{q-1})= m_{\alpha}(u)$   and $m_{\alpha_1}(x_i)=m_{\alpha}(x_i)$ for all $i\not=q-1$,
 $1\leq i\leq p$.
    Then  the sequence $X_u(\alpha_1,v) = (ux_0, . . . , ux_{p})$ is a path in $D_u(\alpha_1)$
 and  the coloring $\alpha'=X_u^{-1}(\alpha_1,v)$  is $K$-equivalent to $\alpha$ and satisfies (1).

 In all these cases the coloring $\alpha'$ has at most as many bad edges as $\alpha$, and fewer ugly edges.
This contradicts the minimality of $\alpha$.  

In the same way we can prove that $X_v(\alpha,u)$ is a cycle in $D_v(\alpha)$
\end{proof}

Property 3.3  implies  the following:

\begin{property}
\label{property4}
 If $e$ is an ugly edge, then both endpoints of $e$ have free neighbors.
 \end{property}
 
 \begin{property}  [\cite{bonamy23}]
 \label{property5}
 There is a bad edge with a free endpoint.
 \end{property}

\begin{proof}
Consider a bad edge (in $\alpha$) which, by Property 3.2, is adjacent to an ugly edge $e$. By Property 3.4, there exists some 
free vertex $u$ adjacent to an endpoint of $e$. Thus $m_\alpha(u)=1$. Then, by Property 3.1, 
 the degree of $u$ is $\Delta$ and 
therefore, there is an edge $uv\in M(\gamma,1)$  incident to $u$. 
Since $u$ is free, $uv$ is bad and, by Property 3.2, is  adjacent to an ugly edge. Since $u$ cannot be incident to
 an ugly edge (it is free), there is some vertex $w\in N(v)$ such that $vw$ is ugly. 
\end{proof}

\begin{property}
\label{property6}
  If $G$ is a chordless  graph, $uv$ is an ugly edge and $X_u(\alpha,v)=(ux_0,ux_1,...,ux_p)$, 
  where $ux_0=uv$ and $p\geq 1$, 
  then $vx_p\notin E(G)$.
 \end{property}
\begin{proof}
Suppose that $vx_p\in E(G)$. 
By Property 3.3, $X_u(\alpha,v)$ is a cycle in $D_u(\alpha)$.
Then $\alpha(ux_0)=1$ and  $m_\alpha(x_p)=1\not=m_\alpha(x_i)$ for $i=0,...,p-1$.
Consider the component $C$ of $K_\alpha(m_\alpha(u),\alpha(ux_p))$ containing the edge $ux_p$.

We will show that $C$ does not contain the vertex $x_{p-1}$.

This is true if $p\geq 2$,   since otherwise $G$ contains a cycle with a chord:
if the vertex $v$ is in $C$, the cycle $C\cup \{ux_{p-1}\}$ has the chord $uv$,
and if  $v$ is not in $C$, the cycle induced by  $(E(C)\setminus \{ux_p\})\cup \{ux_{p-1},uv,vx_p\}$ has the chord $ux_p$.

 If  $p=1$, then $vx_1$ is not in $C$, since otherwise $\alpha(vx_1)=m_\alpha(u)$ and  the coloring $X_v^{-1}(\alpha,u)$
has fewer bad edges as $\alpha$. This implies that $C$ does not contain the vertex $v=x_{0}$, since otherwise $vx_1$ is a chord for the cycle $C\cup \{uv\}$.

Let $\alpha'$ be the coloring of $G$ obtained from $\alpha$ by swapping  $C$. 
Since the color $\alpha(ux_p)=m_\alpha(x_{p-1})$ is  missing  in $\alpha'$ at $u$,  we put $m_{\alpha'}(u)=m_\alpha(x_{p-1})$.
We have that
$\alpha'(ux_i)=\alpha(ux_i)$ and we put  $m_{\alpha'}(x_{i})=m_\alpha(x_{i})$ for $i=0,...,p-1$.
  Now   the sequence  
  $X_u(\alpha',v)=(ux_0,...,ux_{p-1})$ is a path in $D_u(\alpha')$.
Then the coloring $X_u^{-1}(\alpha',v)$ has at most as many bad edges as $\alpha$, and fewer ugly edges.
Since $X_u^{-1}(\alpha',v)$ is $K$-equivalent to $\alpha$, this contradicts the minimality of $\alpha$.
\end{proof}

We continue to prove  the theorem.
By Property 3.5, there is a bad edge, say $wv$, with a free endpoint,
 say $w$, and an ugly edge, say $uv$, incident to $v$, where $\alpha(uv)=1$.
Consider the sequences 
$X_u(\alpha,v)=(uv,ux_1,...,ux_p)$ and $X_v(\alpha,u)=(vu,vy_1,...,vy_q)$.
By Property 3.3, the sequence $X_v(\alpha,u)$ is a cycle in $D_v(\alpha)$, and 
$X_u(\alpha,v)$ is a cycle in $D_u(\alpha)$. 

We will show that the colorings $\alpha$ and $\alpha'=X_u^{-1}(\alpha,v)$ are $K$-equivalent.

This is evident if $p=1$, and  follows from Lemma 2.2, if $p\geq 2$ and the cycle $X_u(\alpha,v)$ is non-saturated.    
Suppose now that $p\geq 2$ and the cycle $X_u(\alpha,v)$ is saturated. 
Then  the component $P$ of $K_\alpha(1, m_\alpha(u))$ containing $u$  contains also the vertex $x_p$.
 We have that $\alpha(vu)=1$, $\alpha(vy_1) = m_\alpha(u)$ and thus the vertex $y_1$ is 
   in $P$ as $u$ and $x_p$.  
We also have that $m_\alpha(y_q)=1$ and, by Property 3.6, $vx_p\notin E(G)$. This implies that $y_q\not=x_p$ and $q\geq 2$.
Then, by Lemmas  2.3, we derive that the colorings $\alpha$ and  
$\alpha'=X_u^{-1}(\alpha,v)$ are $K$-equivalent. 

Now
$\alpha'(uv)=m_\alpha(v)\not=1$, 
  the color 1 is missing at $v$ in $\alpha'$, and
  the coloring $\alpha'$ 
has at most as many bad and ugly edges as $\alpha$.
By Property 3.6,  $vx_p\notin E(G)$  and so the edge $uw$ does not appear in $X_u(\alpha,v)$. 
Thus the color 1 is missing  in $\alpha'$ at $w$ and at  $v$. Then we  recolor the edge $wv$ with color $1$ and obtain
a $(\Delta+1$)-coloring which is $K$-equivalent to $\alpha$ and has  fewer bad edges, a contradiction.  

This complete the proof of the theorem.
\end{proof}

\begin{proof}[{\bf Proof of Corollary 1.3.}]
If $G$ is Class 1, the result follows from Theorem 1.2.
Let $G$ be a  triangle-free Class 2 graph, $\alpha$ be a $(\Delta(G)+1)$-coloring of $G$ and  $u$  be a vertex of $G$ 
with maximum degree $\Delta(G)$.
Consider two disjoint copies $G_1$ and $G_2$ of $G$. Let $u_1$ and $u_2$ be the copies of $u$ in $G_1$ and $G_2$, respectively.
Then the triangle-free graph $H$ obtained from $G_1\cup G_2$ by adding the edge $u_1u_2$ has the maximum degree $\Delta(G)+1$.
The coloring $\alpha$  induces a $(\Delta(G)+1)$-coloring of $H$ as follows: for each edge $e\in E(G)$ color its copies 
in $G_1$ and $G_2$ with the color  $\alpha(e)$. Finally, color the edge $u_1u_2$ with the unique color missing in $\alpha$ at $u$.
Thus $\chi'(H)=\Delta(G)+1=\Delta(H)$, that is, $H$ is a  triangle-free Class 1 graph. Then, by Theorem 1.2, all $(\Delta(H)+1)$-colorings of $H$ are $K$-equivalent.
This implies that all  $(\chi'(G)+1)$-colorings of $G$ are $K$-equivalent, because $\Delta(H)+1=(\Delta(G)+1)+1=\chi'(G)+1$
and any series of $K$-changes in $H$ induces a series of $K$-changes in $G$.
\end{proof}

Corollary 1.4 follows from Theorem 1.2,  since $\chi'(G)=\Delta(G)$ for every simple chordless graph $G$ with $\Delta(G)\geq 3$
(see \cite{machado}).

The next proposition shows that there exists an  infinite class of chordless graphs that are not triangle-free. 

\begin{proposition}
For any two integers 
$k\geq 0$, $d\geq 1$, there is a simple chordless graph $H_k$ with  diameter  greater than $2^{k}d$ which contains at least
$3^{k}$ triangles.
\end{proposition}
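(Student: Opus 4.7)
The plan is to give an explicit construction and verify the three required properties directly. For integers $k \geq 0$ and $d \geq 1$, I would set $L \defeq 2^{k} d$ and $m \defeq 3^{k}$, and define $H_k$ to be the graph obtained from a path $v_0, v_1, \ldots, v_L$ of length $L$ by attaching at the endpoint $v_L$ a family of $m$ triangles with pairwise disjoint ``arms'': for each $i = 1, \ldots, m$ introduce two fresh vertices $a_i, b_i$ and add the edges $v_L a_i$, $v_L b_i$, $a_i b_i$.

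Two of the three properties are immediate from the construction. By design, $H_k$ contains the triangles $v_L a_i b_i$ for $i = 1, \ldots, m$, so it has at least $3^k$ triangles. For the diameter, any walk from $v_0$ to $a_1$ must traverse the entire path and then one further edge, so $d_{H_k}(v_0, a_1) = L + 1 = 2^k d + 1$, yielding $\diam(H_k) > 2^k d$.

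The one step that needs a short argument is chordlessness, and my plan here is to show that every cycle of $H_k$ coincides with one of the triangles $v_L a_i b_i$. The key observation is that each $a_i$ and each $b_i$ has degree exactly $2$ in $H_k$, with neighbourhood contained in $\{v_L, a_i, b_i\}$. Consequently, once a cycle enters a given triangular gadget it can only leave through $v_L$; but a cycle cannot revisit $v_L$, so the cycle is confined to that single gadget. Since the path itself contains no cycle, every cycle of $H_k$ is one of the triangles $v_L a_i b_i$, and each such triangle is trivially chord-free because every pair of its vertices is adjacent.

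The diameter computation and the triangle count are routine, so the only mild obstacle is the chordlessness check; that check reduces to the degree-$2$ observation about the vertices $a_i, b_i$ sketched above.
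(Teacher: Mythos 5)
Your proof is correct, but it takes a genuinely different route from the paper. The paper builds $H_k$ by induction: it starts from an arbitrary tree of diameter $d$ with a single triangle glued to a diametral vertex, and at each step takes three disjoint copies of the current graph and joins them by a new triangle on copies of a diametral vertex, so that the diameter (at least) doubles and the triangle count triples; the triangles end up distributed throughout the graph, and chordlessness is preserved because the three copies meet only in the three vertices of the new triangle. You instead give a one-shot construction: a path of length $2^k d$ with $3^k$ pendant triangles attached at a single hub vertex $v_L$. Your verification is cleaner in one respect -- since every vertex $a_i, b_i$ has degree $2$, every cycle of your graph is forced to be one of the triangles $v_L a_i b_i$, so chordlessness is immediate, whereas the paper's inductive step asserts chordlessness of $H_{t+1}$ with essentially no argument. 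The trade-off is that your hub vertex has degree $2\cdot 3^k + 1$, so the maximum degree of your family is unbounded in $k$, while the paper's recursive gluing adds only $2$ to the degree of each junction vertex per level; since the proposition asks only for existence of chordless graphs with many triangles and large diameter, this costs you nothing here. Both arguments are complete and valid for the statement as given.
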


\begin{proof}
We construct such a graph inductively.
 Choose  an arbitrary tree  $G$ of diameter $D(G)=d$ and two vertices $x_1$, $y_1$ in $G$ such that the distance $d_G(x_1,y_1)$ between them is $D(G)$. Let $H_0$ be the graph obtained from $G$ by adding  two new vertices $x_2, x_3$ and three new edges 
 $x_1x_2, x_2x_3, x_3x_1$. Clearly, $D(H_0)=d+1>2^0d$ and $H_0$ contains one triangle.
 
 Suppose we have already constructed a chordless graph $H_t$ with diameter $D(H_t)>2^td$ 
having at least $3^t$ triangles,  $t\geq 0$. Choose  two vertices $u$ and $v$ in $H_t$ such that the distance $d_{H_t}(u,v)$ between them  
is $D(H_t)$.
Let  
$H_t(1), H_t(2),H_t(3)$  be three disjoint copies of $H_t$, and let $u_i, v_i$ be the copies of  $u, v$ in $H_t(i)$, $i=1,2,3$.
Then the graph $H_{t+1}$ obtained from
$H_t(1)\cup H_t(2)\cup H_t(3)$ by adding the edge $u_1u_2, u_2u_3, u_3u_1$
 is  chordless, has 
more than $3^{t+1}$  triangles
and  $D(H_{t+1})>2^{t+1}d$ because $D(H_{t+1})\geq d_{H_{t+1}}(v_1,v_2)>d_{H_{t}}(v_1,u_1)+d_{H_{t}}(u_2,v_2)$.
\end{proof}

After finalizing our paper, we discovered that 
 J. Narboni  published  a preprint where he  proved  that all Class 1 graphs have the Vizing property
 (see J. Narboni, Vizing's conjecture holds. arXiv:2302.12914, 2023, 43 pages).

\end{document}